%\pdfoutput=1   % for upload on arxiv

\documentclass{scrartcl}

\usepackage[english]{babel}
\usepackage[utf8]{inputenc}
\usepackage[maxbibnames=99, giveninits]{biblatex}       % bibliography package
\addbibresource{sources.bib}
\usepackage{csquotes}       % for correct quotations
\usepackage{amsthm}         % to define theorem-like environments
\usepackage{mathtools}      % to use Paired Delimiter
\usepackage{amsfonts}       % to use math fonts (mathbb)
\usepackage{enumitem}
\usepackage{tikz}           % for drawing
\usepackage{subdepth}       % for correct sub- and superscript alignment in sub- and superscripts
\usepackage{xcolor}         % for using colours
\usepackage{xfrac}

\usetikzlibrary{positioning, arrows.meta, calc}
\tikzstyle{graph} = [node distance = 1.5cm, every path/.style = {semithick}]
\tikzset{graph vertex/.style = {inner sep = 1pt, circle, fill}}
\tikzstyle{graph vertices} = [every node/.style = graph vertex]

\newtheorem{thm}{Theorem}[section]
\newtheorem{cor}[thm]{Corollary}
\newtheorem{lem}[thm]{Lemma}
\newtheorem{prop}[thm]{Proposition}

\theoremstyle{definition}

\newtheorem{exam}[thm]{Example}

\numberwithin{equation}{section}    % to make equation numbers according to section

\DeclarePairedDelimiter\br{(}{)}

\DeclarePairedDelimiter\floor{\lfloor}{\rfloor}
\DeclarePairedDelimiter\abs{|}{|}

\DeclareMathOperator{\Sz}{Sz}
\DeclareMathOperator{\dis}{dis}
\DeclareMathOperator{\deq}{deq}

%\title{Relations of Wiener index and (revised) Szeged index on cacti, and the revised Szeged index as vertex sum}
\title{Comparing Wiener, Szeged and revised Szeged index on cactus graphs}
\author{Stefan Hammer}
\date{\today}

\begin{document}

\maketitle

\begin{abstract}
    We show that on cactus graphs the Szeged index is bounded above by twice the Wiener index. For the revised Szeged index the situation is reversed if the graph class is further restricted. Namely, if all blocks of a cactus graph are cycles, then its revised Szeged index is bounded below by twice its Wiener index. Additionally, we show that these bounds are sharp and examine the cases of equality. Along the way, we provide a formulation of the revised Szeged index as a sum over vertices, which proves very helpful, and may be interesting in other contexts. 
\end{abstract}

\section{Introduction}

Presumably the first topological graph index, the Wiener index, was invented in 1947 by the chemist \citeauthor{1947_StructuralDeterminationOfParaffinBoilingPoints}~\cite{1947_StructuralDeterminationOfParaffinBoilingPoints}, and is used to correlate physicochemical properties to the structure of chemical compounds \cite{2002_TheWienerNumber, 1986_MathematicalConceptsInOrganicChemistry}. Since then it was and still is thoroughly studied, see e.g. \cite{2021_WienerIndexInGraphsWithGivenMinimumDegreeAndMaximumDegree, 2022_WienerIndexOfBasilicaGraphs, 2021_ProofOfAConjectureOnTheWienerIndexOfEulerianGraphs, 2021_OnTheRelationBetweenWienerIndexAndEccentricityOfAGraph, 2022_OnTheWienerIndexOfTwoFamiliesGeneratedByJoiningAGraphToATree, 2021_AnAsymptoticRelationBetweenTheWirelengthOfAnEmbeddingAndTheWienerIndex} for only some of the latest results. Over time many more topological graph indices were devised and investigated. One such topological graph index is the Szeged index that came up as an extension of a formula for the Wiener index of trees. It was first introduced in \cite{1994_AFormulaForTheWienerNumberOfTreesAndItsExtensionToCyclicGraphsContainingCycles} without proper name. By its construction it has meaningful connections to the Wiener index. However,  \citeauthor*{2002_OnGeneralizationOfWienerIndexForCyclicStructures} found that the Szeged index is lacking something for chemical applications in comparison to the Wiener index, and thus introduced in \cite{2002_OnGeneralizationOfWienerIndexForCyclicStructures} a slightly adapted variant of the Szeged index, the later so-called revised Szeged index. It produces better correlations in chemistry than the normal Szeged index \cite{2002_OnGeneralizationOfWienerIndexForCyclicStructures} and both Szeged indices combined can be used to provide a measure of bipartivity of graphs \cite{2010_UseOfTheSzegedIndexAndTheRevisedSzegedIndexForMeasuringNetworkBipartivity}. 

It is rather easy to see that the Wiener index and the (revised) Szeged index coincide on trees. Furthermore, in 1994 some conjectures about the relation of the Wiener and the Szeged index on connected graphs were made by \citeauthor{1994_OnAGraphInvariantRelatedToTheSumOfAllDistancesInAGraph}  \cite{1994_OnAGraphInvariantRelatedToTheSumOfAllDistancesInAGraph, 1994_AFormulaForTheWienerNumberOfTreesAndItsExtensionToCyclicGraphsContainingCycles}. A year later already \citeauthor{1995_SolvingAProblemConnectedWithDistancesInGraphs} proved that the Wiener index and the Szeged index are equal if and only if every block of the graph is complete \cite{1995_SolvingAProblemConnectedWithDistancesInGraphs}. Another year later \citeauthor*{1996_TheSzegedAndTheWienerIndexOfGraphs} showed that the Szeged index is at least as big as the Wiener index \cite{1996_TheSzegedAndTheWienerIndexOfGraphs}. Since then many more authors investigated the relation of the Wiener and the Szeged index, see \cite{2017_OnTheDifferenceBetweenTheSzegedAndTheWienerIndex, 2014_ImprovedBoundsOnTheDifferenceBetweenTheSzegedIndexAndTheWienerIndex, 2013_WienerIndexVersusSzegedIndexInNetworks, 2012_GraphsWhoseSzegedAndWienerNumbersDifferBy4And5} and references therein. This research has been extended to the revised Szeged index \cite{2014_OnTheDifferenceBetweenTheRevisedSzegedIndexAndTheWienerIndex, 2017_OnTheQuotientsBetweenTheRevisedSzegedIndexAndWienerIndex, 2016_OnTheFurtherRelationBetweenTheSzegedIndexAndTheWienerIndex} and to certain graph classes \cite{2018_OnTheDifferenceBetweenTheRevisedSzegedIndexAndTheWienerIndexOfCacti, 2016_ProofsOfThreeConjecturesOnTheQuotientsOfTheSzegedIndexAndTheWienerIndex, 2022_OnTheDifferenceBetweenTheSzegedIndexAndTheWienerIndexOfCacti}, with the most recent work on cactus graphs dating from this year. For further current research in the context of comparing graph indices with the Wiener index, we refer the interested reader to \cite{2022_ComparisonBetweenMerrifieldSimmonsIndexAndWienerIndexOfGraphs, 2022_ComparativeResultsBetweenTheNumberOfSubtreesAndWienerIndexOfGraphs, 2021_RelationsBetweenMerrifieldSimonsAndWienerIndices}.

In this paper, we want to show new relations between Wiener, Szeged and revised Szeged index for the special case of cactus graphs. Namely, we prove that the Szeged index is bounded above by twice the Wiener index. In case of the revised Szeged index the situation is more complex. For bipartite cacti the revised Szeged is equal to the Szeged index, but if we limit the class of cactus graphs to those that have only cycles as blocks, we can reverse the above statement. That is, the revised Szeged index is bounded below by twice the Wiener index. Additionally, we show that these bounds are sharp and examine the cases of equality. Along the way, we provide a formulation of the revised Szeged index as a sum over vertices, which proves very helpful, and may be interesting in other contexts. 

The paper is organized as follows. In Section~\ref{sec:Preliminaries}, we first introduce the main definitions and directly afterwards show how the revised Szeged index can be written as a sum over vertices (Theorem~\ref{thm:RevisedSzAsVertexSum}). Then we introduce some auxiliary results needed in the following sections. The relation of the Szeged index and the Wiener index on cactus graphs is the main topic of Section~\ref{sec:WIAndSzOnCactusGraphs}. We show that the Szeged index is bounded above by twice the Wiener index (Theorem~\ref{thm:Sz2W}), and also look at equality cases. Section~\ref{sec:WIAndRevisedSzOnCactusGraphs} starts with an example showing that arbitrary cactus graphs can have a revised Szeged index equal to twice its Wiener index. As a consequence, we look at a subclass of the cactus graphs to prove a reverse relation for the revised Szeged and the Wiener index (Theorem~\ref{thm:RevisedSz2W}).

\section{Preliminaries and the revised Szeged index as vertex sum}
\label{sec:Preliminaries}

If not otherwise mentioned, we are working with a finite, simple and connected graph $G$, that has vertex set $V(G)$ and edge set $E(G)$. Let $u$, $v$ be vertices of $G$. Then we denote with $d_G(u, v)$ the distance of $u$ and $v$ in $G$, that is, the length of the shortest path connecting $u$ and $v$ in $G$. For a path $P$, we use $\abs{P}$ for its length. Furthermore, we write $n_G(u, v)$ for the number of vertices closer to $u$ than to $v$, and $o_G(u, v) = o_G(v, u)$ for the number of vertices with equal distance to $u$ and $v$. With this, the \emph{Wiener index}, the \emph{Szeged index}, and the \emph{revised Szeged index} are defined respectively by
\begin{equation*}
    \begin{split}
        W(G) &= \sum_{\{u, v\} \subseteq V(G)} d_G(u, v) = \frac{1}{2} \, \sum_{u, v \in V(G)} d_G(u, v), \\
        \Sz(G) &= \sum_{\{s, t\} \in E(G)} n_G(s, t) \, n_G(t, s) , \\
        \Sz^*(G) &= \sum_{\{s, t\} \in E(G)} 
            \br*{n_G(s, t) + \frac{1}{2} \, o_G(s, t)} 
            \br*{n_G(t, s) + \frac{1}{2} \, o_G(t, s)} . 
    \end{split}
\end{equation*}
Note, that the Wiener index is a sum over all unordered pairs of vertices, whereas the (revised) Szeged index is a sum over all edges. 

In \cite{2000_SomeGraphsWithExtremalSzegedIndex}, \citeauthor*{2000_SomeGraphsWithExtremalSzegedIndex} introduced for vertices $u$, $v$ and an edge $\{s, t\}$ the function 
\begin{equation*}
    \mu_{u, v}(\{s, t\}) =
        \begin{cases}
            1 &\text{if} 
                \begin{cases}
                    d_G(u, s) < d_G(u, t) \text{ and } d_G(v, s) > d_G(v, t) , \\
                    \text{or} \\
                    d_G(u, s) > d_G(u, t) \text{ and } d_G(v, s) < d_G(v, t) , \\
                \end{cases} \\
            0 &\text{otherwise.}
        \end{cases}
\end{equation*}
This can be considered an indicator function that is 1 if and only if the vertices $u$ and $v$ contribute to $n_G(s, t) \, n_G(t, s)$. \citeauthor*{2017_OnTheDifferenceBetweenTheSzegedAndTheWienerIndex}~\cite{2017_OnTheDifferenceBetweenTheSzegedAndTheWienerIndex} used $\mu_{u, v}$ to rewrite the Szeged index in the following way:
\begin{equation*}
    \Sz(G) = \sum_{\{u, v\} \subseteq V(G)} \sum_{\{s, t\} \in E(G)} \mu_{u, v}(\{s, t\}) .
\end{equation*}
With this reformulation, the Szeged index is also a sum over all unordered pairs of vertices. Additionally, \citeauthor*{2017_OnTheDifferenceBetweenTheSzegedAndTheWienerIndex} called all edges $e$ satisfying $\mu_{u, v}(e) = 1$, `good' for $\{u, v\}$, and referenced this again to \citeauthor*{2000_SomeGraphsWithExtremalSzegedIndex}~\cite{2000_SomeGraphsWithExtremalSzegedIndex}. However, \citeauthor*{2000_SomeGraphsWithExtremalSzegedIndex} used the term `good edge' for a completely different concept. Because of this, and the fact that the term `good' is not descriptive, we decided to use a different notation. We call edges $e$ satisfying $\mu_{u, v}(e) = 1$, \emph{$(u, v)$-distance-disparate}, and denote with $\dis_G(u, v)$ the number of $(u, v)$-distance-disparate edges in $G$. Hence, we can write for the Szeged index,
\begin{equation*}
    \Sz(G) = \sum_{\{u, v\} \subseteq V(G)} \dis_G(u, v) = \frac{1}{2} \, \sum_{u, v \in V(G)} \dis_G(u, v).
\end{equation*}

Since the revised Szeged index may not even be an integer, there cannot be a single indicator function as there is for the Szeged index. So it seems difficult to formulate the revised Szeged index as sum over vertices. Still a rather similar approach works. The first step is to consider an equivalent of $\mu_{u, v}$ for single vertices and edges having end points with the same distance to the vertex. Namely, we define for a vertex $v$ and an edge $\{s, t\}$, 
\begin{equation*}
    \nu_v(\{s, t\}) = 
        \begin{cases}
            1 &\text{if } d_G(v, s) = d_G(v, t), \\
            0 &\text{otherwise,}
        \end{cases}
\end{equation*}
an indicator function that is 1 if and only if the end points of the edge have the same distance to $v$. Now, similar to before, we call edges $e$ satisfying $\nu_u(e) = 1$ and $\nu_v(e) = 1$ for vertices $u$ and $v$, \emph{$(u, v)$-distance-equal}, and denote with $\deq_G(u, v)$ the number of $(u, v)$-distance-equal edges in $G$. These are the ingredients necessary to write the revised Szeged index as sum over vertices. 

\begin{thm} \label{thm:RevisedSzAsVertexSum}
The revised Szeged index of a graph $G$ can be written as sum over vertices in the following form: 
\begin{equation*}
    \Sz^*(G) = \frac{1}{2} \, \sum_{u, v \in V(G)} 
        \br*{\dis_G(u, v) + \deq_G(u, u) - \frac{1}{2} \, \deq_G(u, v)} .
\end{equation*}
\end{thm}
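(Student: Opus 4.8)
The plan is to prove the identity by interchanging the order of summation. The right-hand side is a priori a double sum over \emph{ordered} pairs of vertices, but each of $\dis_G$ and $\deq_G$ is itself a sum over the edges of $G$: indeed $\dis_G(u,v) = \sum_{\{s,t\} \in E(G)} \mu_{u,v}(\{s,t\})$ and $\deq_G(u,v) = \sum_{\{s,t\} \in E(G)} \nu_u(\{s,t\})\,\nu_v(\{s,t\})$, and in particular $\deq_G(u,u) = \sum_{\{s,t\} \in E(G)} \nu_u(\{s,t\})$ since $\nu_u$ is $\{0,1\}$-valued. Substituting these into the claimed formula and swapping the two (finite) sums, the coefficient of a fixed edge $\{s,t\}$ becomes $\tfrac12 \sum_{u,v \in V(G)} \br*{\mu_{u,v}(\{s,t\}) + \nu_u(\{s,t\}) - \tfrac12\,\nu_u(\{s,t\})\,\nu_v(\{s,t\})}$, so it suffices to show this quantity equals the summand $\br*{n_G(s,t) + \tfrac12 o_G(s,t)}\br*{n_G(t,s) + \tfrac12 o_G(t,s)}$ of $\Sz^*(G)$ for each edge, and then sum over all edges.

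Next I would evaluate the three inner sums for a fixed edge $\{s,t\}$. Writing $a = n_G(s,t)$, $b = n_G(t,s)$, $c = o_G(s,t) = o_G(t,s)$ and $n = \abs{V(G)} = a + b + c$, the sum $\sum_{u,v} \mu_{u,v}(\{s,t\})$ counts ordered pairs of vertices with one strictly nearer $s$ and the other strictly nearer $t$, hence equals $2ab$; the sum $\sum_{u,v}\nu_u(\{s,t\})$ equals $nc$, the summand being independent of $v$; and $\sum_{u,v}\nu_u(\{s,t\})\,\nu_v(\{s,t\}) = \br*{\sum_u \nu_u(\{s,t\})}^2 = c^2$. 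Plugging these in, the edge-$\{s,t\}$ contribution to the right-hand side is $\tfrac12\br*{2ab + nc - \tfrac12 c^2} = ab + \tfrac12 nc - \tfrac14 c^2$, and substituting $n = a + b + c$ turns this into $ab + \tfrac12 c(a+b) + \tfrac14 c^2 = \br*{a + \tfrac12 c}\br*{b + \tfrac12 c}$, which is exactly the $\{s,t\}$-summand of $\Sz^*(G)$.

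The computations are all elementary, so there is no genuine obstacle; the points that need care are purely bookkeeping: keeping track of ordered versus unordered pairs (the definition of $\Sz^*$ via the vertex sum runs over ordered pairs, including the diagonal $u = v$), checking that the diagonal terms are handled automatically (one has $\dis_G(u,u) = 0$ and the combination $\deq_G(u,u) - \tfrac12\deq_G(u,u)$ is consistent with the general per-edge identity), using $\nu_u^2 = \nu_u$, and invoking $a + b + c = n$ at the end. Conceptually, the only thing happening is that expanding $\br*{a + \tfrac12 c}\br*{b + \tfrac12 c}$ and matching it term by term against the three available ingredients $\dis_G(u,v)$, $\deq_G(u,u)$, $\deq_G(u,v)$ forces precisely the coefficients $1$, $1$, $-\tfrac12$.
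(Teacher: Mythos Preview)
Your proof is correct and follows essentially the same approach as the paper: both hinge on the identity $\br*{a+\tfrac12 c}\br*{b+\tfrac12 c}=ab+\tfrac12 nc-\tfrac14 c^2$ (with $n=a+b+c$) together with the identifications $\sum_{u,v}\mu_{u,v}(e)=2ab$, $\sum_{u,v}\nu_u(e)=nc$, $\sum_{u,v}\nu_u(e)\nu_v(e)=c^2$, and an interchange of the edge and vertex sums. The only cosmetic difference is direction---the paper starts from the edge-sum definition of $\Sz^*$ and expands toward the vertex sum, whereas you start from the claimed vertex sum and collapse it edge by edge.
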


\begin{proof}
Let $n$ be the number of vertices in $G$. Use that $n = n_G(s, t) + n_G(t, s) + o_G(s, t)$ for all edges $\{s, t\}$ to rewrite the revised Szeged index: 
\begin{equation} \label{eq:RevisedSzegedAsVertexSum}
    \begin{split}
        \Sz^*(G) &= \sum_{\{s, t\} \in E(G)} 
                \br*{n_G(s, t) + \frac{1}{2} \, o_G(s, t)} 
                \br*{n_G(t, s) + \frac{1}{2} \, o_G(t, s)} \\
            %&= \sum_{\{s, t\} \in E(G)} 
            %    \br*{n_G(s, t) n_G(t, s) 
            %        + \frac{1}{2} \, o_G(s, t) \br*{n_G(s, t) + n_G(t, s)}
            %        +  \frac{1}{4} \, o_G(s, t)^2} \\
            &= \sum_{\{s, t\} \in E(G)} 
                \br*{n_G(s, t) n_G(t, s) 
                    + \frac{1}{2} \, o_G(s, t) \br*{n - o_G(s, t)}
                    +  \frac{1}{4} \, o_G(s, t)^2} \\
            &= \Sz(G) 
                + \frac{1}{2} \, n \sum_{\{s, t\} \in E(G)} o_G(s, t) 
                - \frac{1}{4} \, \sum_{\{s, t\} \in E(G)} o_G(s, t)^2 . 
    \end{split}
\end{equation}
Since a vertex $v$ is counted in $o_G(s, t)$ if and only if $d_G(v, s) = d_G(v, t)$, we can rewrite the second sum to
\begin{equation*}
    \sum_{\{s, t\} \in E(G)} o_G(s, t) 
        = \sum_{u \in V(G)} \sum_{e \in E(G)} \nu_u(e)
        = \sum_{u \in V(G)} \deq_G(u, u) .
\end{equation*}
For the third term notice that vertices $u$ and $v$ are involved in $o_G(s, t) \cdot o_G(s, t)$ if and only if $d_G(u, s) = d_G(u, t)$ and $d_G(v, s) = d_G(v, t)$, that is $\{s, t\}$ is counted in $\deq_G(u, v)$. Thus, we can reformulate this sum as well:
\begin{equation*}
    \sum_{\{s, t\} \in E(G)} o_G(s, t)^2 = \sum_{u, v \in V(G)} \deq_G(u, v) .
\end{equation*}
Insert the reformulations and the Szeged index written as vertex sum in Equation~(\ref{eq:RevisedSzegedAsVertexSum}) and write for $n$ the sum over all vertices to get the desired result:
\begin{equation*}
    \begin{split}
        \Sz^*(G) &= \frac{1}{2} \, \sum_{u, v \in V(G)} \dis_G(u, v)
            + \frac{1}{2} \, n \sum_{u \in V(G)} \deq_G(u, u)
            - \frac{1}{4} \, \sum_{u, v \in V(G)} \deq_G(u, v) \\
            &= \frac{1}{2} \, \sum_{u, v \in V(G)} 
            \br*{\dis_G(u, v) + \deq_G(u, u) - \frac{1}{2} \, \deq_G(u, v)} .
    \end{split}
\end{equation*}
\end{proof}

A noteworthy consequence of the above result is that the difference between the Szeged and the revised Szeged index can be nicely described. 

\begin{cor} \label{cor:SzDiff}
The difference between the Szeged and the revised Szeged index of a graph $G$ on $n$ vertices satisfies
\begin{equation*}
    \begin{split}
        \Sz^*(G) - \Sz(G) &= \frac{1}{2} \, \sum_{\{s, t\} \in E(G)} 
            \br*{n \cdot o_G(s, t) - \frac{1}{2} \, o_G(s, t)^2} \\
            &= \frac{1}{2} \, n \sum_{u \in V(G)} \deq_G(u, u)
                - \frac{1}{4} \, \sum_{u, v \in V(G)} \deq_G(u, v) .
    \end{split}
\end{equation*}
\end{cor}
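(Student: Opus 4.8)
The plan is to read off the corollary directly from the proof of Theorem~\ref{thm:RevisedSzAsVertexSum}, since the difference $\Sz^*(G) - \Sz(G)$ already appeared there in two forms. First I would recall Equation~(\ref{eq:RevisedSzegedAsVertexSum}), which after cancelling $\Sz(G)$ from both sides gives immediately
\begin{equation*}
    \Sz^*(G) - \Sz(G) = \frac{1}{2} \, n \sum_{\{s, t\} \in E(G)} o_G(s, t) - \frac{1}{4} \, \sum_{\{s, t\} \in E(G)} o_G(s, t)^2 ,
\end{equation*}
and I would factor out $\frac12$ and combine the two edge-sums into a single sum over $E(G)$ of $n \cdot o_G(s,t) - \frac12 o_G(s,t)^2$. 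That establishes the first displayed equality.

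For the second equality I would reuse the two reformulations proved inside the theorem's proof verbatim: that $\sum_{\{s,t\} \in E(G)} o_G(s, t) = \sum_{u \in V(G)} \deq_G(u, u)$, because a vertex $u$ is counted in $o_G(s,t)$ exactly when $\nu_u(\{s,t\}) = 1$; and that $\sum_{\{s,t\} \in E(G)} o_G(s, t)^2 = \sum_{u, v \in V(G)} \deq_G(u, v)$, because an ordered pair $(u,v)$ contributes to $o_G(s,t)^2$ exactly when $\{s,t\}$ is $(u,v)$-distance-equal. Substituting these into the first form yields $\frac12 n \sum_{u} \deq_G(u,u) - \frac14 \sum_{u,v} \deq_G(u,v)$, which is the second displayed equality.

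Alternatively — and perhaps more cleanly as an exposition choice — one can obtain the corollary by subtracting $\Sz(G) = \frac12 \sum_{u,v} \dis_G(u,v)$ from the vertex-sum formula in Theorem~\ref{thm:RevisedSzAsVertexSum}; the $\dis_G(u,v)$ terms cancel and what remains is exactly $\frac12 \sum_{u,v}(\deq_G(u,u) - \frac12 \deq_G(u,v))$, which rearranges (splitting off the diagonal contribution and writing $\sum_{u,v} \deq_G(u,u) = n \sum_u \deq_G(u,u)$) into the second form, and then back through the vertex-to-edge translations into the first. There is essentially no obstacle here: the corollary is a bookkeeping consequence of identities already in hand, so the only thing to be careful about is the factor tracking — in particular that the $\frac14$ coefficient is preserved and that the sum over \emph{ordered} pairs $(u,v)$ (which double counts off-diagonal pairs and counts each diagonal pair once) matches the $o_G(s,t)^2$ expansion correctly. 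I would therefore present the proof in a couple of lines, citing Equation~(\ref{eq:RevisedSzegedAsVertexSum}) and the two reformulations from the preceding proof.
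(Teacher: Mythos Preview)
Your proposal is correct and matches the paper's intent exactly: the paper does not even write out a separate proof for this corollary, presenting it simply as an immediate consequence of the identities established in the proof of Theorem~\ref{thm:RevisedSzAsVertexSum}. Your reading off of Equation~(\ref{eq:RevisedSzegedAsVertexSum}) and the two $o_G \leftrightarrow \deq_G$ reformulations is precisely the intended derivation, and your factor tracking is fine.
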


Before we come to the comparison of the Wiener index and the (revised) Szeged index on cactus graphs, we need some general results about graphs. The first is about the connection of $\dis_G$ and $d_G$ on cycles. 

\begin{lem} \label{lem:CycleDis2D}
Let $u$ and $v$ be two distinct vertices of a cycle $C$ of length $n$. Then
\begin{equation*}
    \dis_C(u, v) = 
                \begin{cases}
                    2 \, d_C(u, v)      & \text{if $n$ is even,} \\
                    2 \, d_C(u, v) - 1  & \text{if $n$ is odd.} \\
                \end{cases} 
\end{equation*}
\end{lem}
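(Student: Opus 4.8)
The plan is to fix convenient coordinates on the cycle and turn the statement into a counting problem about sign patterns. I would label the vertices of $C$ by $0, 1, \dots, n-1$ in cyclic order, write $e_i = \{i, i+1\}$ (indices modulo $n$) for its edges, and assume without loss of generality that $u = 0$ and $v = d$ with $d = d_C(u,v)$, so $1 \le d \le \floor{n/2}$. The starting observation is the familiar ``unimodal'' shape of the distance function from a fixed vertex on a cycle: as $i$ runs once around $C$, the value $d_C(w, i)$ climbs in unit steps from $0$ up to $\floor{n/2}$ and then descends back down, the maximum being attained at the single antipodal vertex when $n$ is even and on a two-vertex ``plateau'' when $n$ is odd. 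Hence the sign $\sigma^w_i \coloneqq \operatorname{sign}\br*{d_C(w, i+1) - d_C(w, i)} \in \{-1, 0, +1\}$ is well defined, and, read cyclically, $\sigma^0$ is a block of $+1$'s on $\{0, \dots, \floor{n/2} - 1\}$ followed by a block of $-1$'s, with a single interposed $0$ exactly when $n$ is odd; the pattern $\sigma^d$ is the cyclic shift of $\sigma^0$ by $d$, and because $d \le \floor{n/2}$ this shift does not wrap the $+1$-block around.

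Next I would observe that an edge $e_i$ is $(u,v)$-distance-disparate precisely when $\sigma^0_i$ and $\sigma^d_i$ are both nonzero and have opposite signs, i.e.\ when $\sigma^0_i \cdot \sigma^d_i = -1$. This is immediate from the definition of $\mu_{u,v}$, since adjacent vertices have distances differing by at most $1$, so ``$d_C(w, i) < d_C(w, i+1)$'' means $\sigma^w_i = +1$ and ``$d_C(w, i) > d_C(w, i+1)$'' means $\sigma^w_i = -1$. Therefore $\dis_C(u, v)$ counts exactly the indices $i$ at which one of $\sigma^0, \sigma^d$ equals $+1$ and the other equals $-1$.

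It then remains to count. Let $I_0^{+}, I_0^{-}, I_d^{+}, I_d^{-}$ be the index sets on which $\sigma^0$ resp.\ $\sigma^d$ take the value $+1$ resp.\ $-1$; the quantity sought is $\abs{I_0^{+} \cap I_d^{-}} + \abs{I_0^{-} \cap I_d^{+}}$. When $n$ is even there are no zeros, $\abs{I_0^{+}} = \abs{I_d^{+}} = n/2$, and since $I_d^{+}$ is $I_0^{+}$ shifted cyclically by $d$ one gets $\abs{I_0^{+} \cap I_d^{-}} + \abs{I_0^{-} \cap I_d^{+}} = \abs{I_0^{+} \mathbin{\triangle} I_d^{+}} = 2d$, as claimed. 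When $n = 2m+1$ is odd each of these four sets has $m$ elements and there is one zero on each side; writing out the index intervals modulo $n$ and intersecting them gives $\abs{I_0^{+} \cap I_d^{-}} = d$ and $\abs{I_0^{-} \cap I_d^{+}} = d-1$, for a total of $2d - 1$. I expect the only real obstacle to be this final bookkeeping: keeping the index intervals straight modulo $n$ and correctly placing the single zero in the odd case, which is precisely what makes the two intersection counts differ by one and thereby produces the ``$-1$''.
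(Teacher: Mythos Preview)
Your argument is correct. Both your proof and the paper's rest on the same observation: for each edge of the cycle, exactly one endpoint is closer to a given vertex $w$ (except for one ``tie'' edge when $n$ is odd), and an edge is $(u,v)$-distance-disparate precisely when $u$ and $v$ disagree on which endpoint is nearer. The paper packages this geometrically, defining for each vertex the paths $P_r(w)$ and $P_l(w)$ of length $\floor{n/2}$ going in each direction, identifying the distance-disparate edges as those in $P_r(u) \cap P_l(v)$ or $P_l(u) \cap P_r(v)$, and computing the second intersection length by inclusion--exclusion on the total cycle length. You instead fix explicit coordinates and encode the same dichotomy as a $\{+1,0,-1\}$-valued sign sequence, then count sign disagreements by intersecting index intervals modulo $n$. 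The two computations are reparametrisations of each other (your $I_0^{+}$ is the edge set of the paper's $P_r(u)$, and the symmetric-difference identity in the even case is exactly the paper's inclusion--exclusion); your version makes the odd-case bookkeeping slightly more explicit, while the paper's path language keeps the picture coordinate-free.
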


\begin{proof}
To make things easier, we think of a suitable embedding of $C$ in the plane and say right for counterclockwise, and left for clockwise. For some vertex $w$ in $C$, let $P_r(w)$ be the path starting at $w$ and going $\floor{\sfrac{n}{2}}$ edges to the right, and $P_l(w)$ the path starting at $w$, going $\floor{\sfrac{n}{2}}$ edges to the left. We denote the terminal vertices of $P_r(w)$ and $P_l(w)$ with $w_r$ and $w_l$, respectively. 

Let $e$ be an edge in $C$. It is clear that if $e$ is in $P_r(u)$, then the left vertex of $e$ is closer to $u$, and vice versa, if $e$ is in $P_l(u)$, then the right vertex of $e$ is closer to $u$. For $v$ the situation is the same. Thus, $e$ is $(u, v)$-distance-disparate if and only if it is contained in the path $P_r(u) \cap P_l(v)$, or in the path $P_l(u) \cap P_r(v)$. 

Without lost of generality, we can assume $v$ is in $P_r(u)$, see Figure~\ref{fig:EvenCycle} for an exemplary illustration of the situation. In this case, $P_r(u) \cap P_l(v)$ is a shortest path from $u$ to $v$, and $P_l(u) \cap P_r(v)$ is a shortest path from $u_l$ to $v_r$. So we have
\begin{equation} \label{eq:CycleWiSz}
    \dis_C(u, v) = d_C(u, v) + d_C(u_l, v_r). 
\end{equation}
By inclusion--exclusion principle, the distance from $u_l$ to $v_r$ can be determined by
\begin{equation*}
    \begin{split}
        d_C(u_l, v_r) &= \abs{P_l(u) \cap P_r(v)} \\
            &= \abs{P_r(u) \cap P_l(v)} + \abs{P_l(u)} + \abs{P_r(v)} - \abs{E(C)} \\
            &= d_C(u, v) + 2 \, \floor{\sfrac{n}{2}} - n .
    \end{split}
\end{equation*}
Now considering even and odd $n$ respectively, and inserting $d_C(u_l, v_r)$ in (\ref{eq:CycleWiSz}) completes the proof. 
\end{proof}

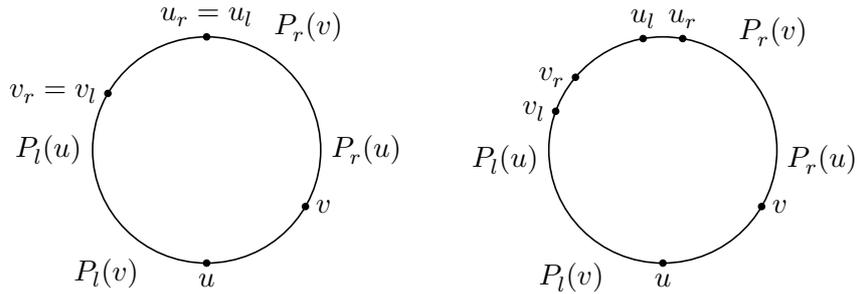
\begin{figure} [ht]
\centering
\begin{tikzpicture} [graph]
    \def\r{1.5cm}
    
    \begin{scope}[xshift=0cm]
        \begin{scope} [graph vertices]
        	\node [yshift = -\r] (u) {};
        	\node [yshift = \r] (ul) {};
        	
        	\draw (-30 : \r) node (v) {};
        	\draw (150 : \r) node (vr) {};
        \end{scope}
        
        \draw (0, 0) circle (\r);
        
    	\node at (u) [below] {$u$};
    	\node at (ul) [above] {$u_r = u_l$};
    	
    	\node at (v) [right] {$v$};
    	\node at (vr) [left] {$v_r = v_l$};
    	
    	\draw (0 : \r)  node [right] {$P_r(u)$};
    	\draw (180 : \r) node [left] {$P_l(u)$};
    	\draw (60 : \r)  node [above right] {$P_r(v)$};
    	\draw (240 : \r) node [below left] {$P_l(v)$};
    \end{scope}
    
    \begin{scope}[xshift=6cm]
        \begin{scope} [graph vertices]
        	\node [yshift = -\r] (u) {};
        	\draw (80 : \r) node (ur) {};
        	\draw (100 : \r) node (ul) {};
        	
        	\draw (-30 : \r) node (v) {};
        	\draw (140 : \r) node (vr) {};
        	\draw (160 : \r) node (vl) {};
        \end{scope}
        
        \draw (0, 0) circle (\r);
        
    	\node at (u) [below] {$u$};
    	\node at (ur) [above] {$u_r$};
    	\node at (ul) [above] {$u_l$};
    	
    	\node at (v) [right] {$v$};
    	\node at (vr) [left] {$v_r$};
    	\node at (vl) [left] {$v_l$};
    	
    	\draw (-5 : \r)  node [right] {$P_r(u)$};
    	\draw (185 : \r) node [left] {$P_l(u)$};
    	\draw (55 : \r)  node [above right] {$P_r(v)$};
    	\draw (245 : \r) node [below left] {$P_l(v)$};
    \end{scope}
\end{tikzpicture}
\caption{Cycle $C$ of even length left and of odd length right, with vertices $u$, $v$, and the paths going right and left including their terminal vertices.}
\label{fig:EvenCycle}
\end{figure}

The next result is about splitting distances in a block-cut-vertex decomposition of the given graph. Recall, a block is a maximal 2-connected subgraph, and in a block-cut-vertex decomposition blocks only overlap at cut vertices. More information on blocks and the block-cut-vertex decomposition can be found in \cite{1984_Book_GraphTheory}. 

\begin{prop} \label{prop:DistancePartition}
Let $u$ and $v$ be vertices of a graph $G$ with set of blocks $\mathcal{B}$ obtained by the block-cut-vertex decomposition for $G$. For a block $B$ in $\mathcal{B}$, denote by $u_B$ and $v_B$ the vertices in $B$ closest to $u$ and $v$, respectively. Then
\begin{equation}
    d_G(u, v) = \sum_{B \in \mathcal{B}} d_G(u_B\,, v_B) .
\end{equation}
\end{prop}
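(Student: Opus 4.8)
The plan is to reduce everything to the structure of the block--cut tree of $G$. The load-bearing structural fact is that any simple path $P$ from $u$ to $v$ decomposes uniquely as a concatenation $P = P_1 P_2 \cdots P_k$, where each $P_i$ lies inside a single block $B_i$ and runs between two consecutive vertices of a sequence $u = c_0, c_1, \dots, c_{k-1}, c_k = v$; here $c_1, \dots, c_{k-1}$ are exactly the cut vertices of $G$ that separate $u$ from $v$, and $B_1, \dots, B_k$ are precisely the blocks lying on the $u$--$v$ path in the block--cut tree, so they do not depend on the choice of $P$. (This is standard; see \cite{1984_Book_GraphTheory}.) Applying it to a \emph{shortest} $u$--$v$ path and using that every sub-path of a shortest path is itself shortest, I obtain
\[
    d_G(u, v) = \sum_{i=1}^{k} d_G(c_{i-1}, c_i).
\]

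It then remains to match the terms. For a block $B_i$ on the tree path with $i \ge 2$, deleting $c_{i-1}$ separates $u$ from $B_i \setminus \{c_{i-1}\}$, so every $u$--$B_i$ path passes through $c_{i-1}$, whence $u_{B_i} = c_{i-1}$; for $i = 1$ we simply have $u \in B_1$ and $u_{B_1} = u = c_0$. Symmetrically $v_{B_i} = c_i$ for all $i$. Thus each summand above equals $d_G(u_{B_i}, v_{B_i})$. For any block $B$ \emph{not} on the tree path, both $u$ and $v$ can reach $B$ only through the single cut vertex of $B$ lying on the path joining $B$ to the $u$--$v$ tree path, so $u_B = v_B$ and $d_G(u_B, v_B) = 0$. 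Summing $d_G(u_B, v_B)$ over all $B \in \mathcal{B}$ therefore yields $\sum_{i=1}^{k} d_G(c_{i-1}, c_i) = d_G(u, v)$, as desired.

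If I preferred to avoid invoking the block--cut tree machinery wholesale, I would instead induct on $|\mathcal{B}|$: peel off a leaf block $L$ with its unique cut vertex $c$, put $H = G - (V(L) \setminus \{c\})$ (whose block set is $\mathcal{B} \setminus \{L\}$), and split into cases according to whether each of $u, v$ lies in $H$ or in $V(L) \setminus \{c\}$. The ingredients needed are all immediate from $c$ being a cut vertex: distances between vertices of $H$ coincide in $H$ and in $G$; any shortest path from a vertex of $L$ to a vertex of $H$ runs through $c$; and $u_B = c_B$ for every $B \ne L$ whenever $u \in V(L)$, so the off-$L$ contributions from $u$ agree with those from $c$.

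I expect the only real difficulty to be the bookkeeping in the first paragraph — verifying that a simple path respects the block decomposition and traverses blocks in block--cut-tree order, and that the cut vertex through which the path first enters a given block is exactly the closest vertex of that block to the corresponding endpoint. This is elementary, but it is where the care goes; once it is in place the distance arithmetic is immediate. (In the inductive variant, the analogous care is in checking the three facts above without circularity.)
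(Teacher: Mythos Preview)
Your proposal is correct and takes essentially the same approach as the paper: decompose a shortest $u$--$v$ path into per-block subpaths (each itself shortest), identify the entry/exit vertices of each visited block with $u_B$ and $v_B$, and observe that blocks off the path have $u_B = v_B$ and contribute zero. The paper argues this slightly more informally without explicitly naming the block--cut tree, and does not give your inductive alternative, but the core argument is the same.
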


\begin{proof}
Let $P$ be a shortest path from $u$ to $v$ and $\mathcal{B}' \subseteq \mathcal{B}$ the set of blocks visited by $P$. Every block $B$ in $\mathcal{B}'$ is entered by $u_B$ and left by $v_B$\,, so $P$ can be decomposed into subpaths $P_B$, where for a block $B$ the subpath $P_B$ starts at $u_B$ and ends at $v_B$\,. Since every subpath of a shortest path is a shortest path itself, it follows that
\begin{equation}
    d_G(u, v) = \abs{P} = \sum_{B \in \mathcal{B}'} \abs{P_B} = \sum_{B \in \mathcal{B}'} d_G(u_B\,, v_B) .
\end{equation}

Now consider a block $B$ not visited by $P$. Since we have a block-cut-vertex decomposition, there is a unique vertex $w$ in $B$ minimizing the distance from the block $B$ to the path $P$. This vertex is also a cut vertex and thus it minimises the distance from $B$ to any vertex of the path $P$. Hence, it follows that $u_B = v_B = w$, and $d_G(u_B\,, v_B) = 0$. This finishes the proof. 
\end{proof}

Note, in the proof of Proposition~\ref{prop:DistancePartition} we do not use that blocks are two-connected. That means, instead of blocks, we could split the graph into arbitrary subgraphs that only overlap at cut vertices. 

In the remaining two sections, we apply the above tools to the so called cactus graphs. These are connected graphs where every two distinct cycles have at most one common vertex. Alternatively, the graph consists of a single vertex, or every block is either an edge or a cycle.

\section{Comparing Wiener and Szeged index on cactus graphs}
\label{sec:WIAndSzOnCactusGraphs}

Since every edge on a shortest path from $u$ to $v$ is clearly $(u, v)$-distance-disparate, formulating the Szeged index as sum over vertices gives the first part of the following inequality: 
\begin{equation*}
    W(G) \leq \Sz(G) \leq \Sz^*(G) .
\end{equation*}

Already \citeauthor*{2000_SomeGraphsWithExtremalSzegedIndex} used the indicator function $\mu_{u, v}$ to show additionally that equality holds in the first part if and only if every block of $G$ is complete, see \cite[Theorem~2.1]{2000_SomeGraphsWithExtremalSzegedIndex}. The inequality of the second part is clear by definition, whereas equality holds if and only if $G$ is bipartite. This was shown by \citeauthor{2010_UseOfTheSzegedIndexAndTheRevisedSzegedIndexForMeasuringNetworkBipartivity}, see \cite[Theorem 1]{2010_UseOfTheSzegedIndexAndTheRevisedSzegedIndexForMeasuringNetworkBipartivity}. Besides, it follows from Corollary~\ref{cor:SzDiff}. 

Here, we want to show a different inequality, true for the special class of cactus graphs. 

\begin{thm} \label{thm:Sz2W}
Let $G$ be a cactus graph, then 
\begin{equation*}
    \Sz(G) \leq 2 \, W(G), 
\end{equation*}
with equality if and only if every block of $G$ is a cycle of even length. 
\end{thm}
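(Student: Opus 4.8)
The plan is to use the block-cut-vertex decomposition together with the vertex-sum formulations of $W$ and $\Sz$. By Proposition~\ref{prop:DistancePartition}, $W(G) = \frac{1}{2}\sum_{u,v}\sum_{B\in\mathcal{B}} d_G(u_B, v_B)$, and I want a matching decomposition of $\dis_G(u,v)$. The key structural observation for cacti is that an edge $e$ lying in block $B$ is $(u,v)$-distance-disparate in $G$ if and only if it is $(u_B, v_B)$-distance-disparate inside $B$: removing $e$ from a block $B$ (which is an edge or a cycle) splits $G$ into two sides, and which side a vertex $w$ falls on is determined entirely by which side $w_B$ falls on; moreover $d_G(w,s)-d_G(w,t) = d_B(w_B,s)-d_B(w_B,t)$ for $s,t$ the endpoints of $e$, since the path from $w$ to $B$ is common. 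Hence $\dis_G(u,v) = \sum_{B\in\mathcal{B}} \dis_B(u_B, v_B)$, and so $\Sz(G) = \frac{1}{2}\sum_{u,v}\sum_{B\in\mathcal{B}}\dis_B(u_B,v_B)$.

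Next I would compare the two sums block by block. For a block $B$ that is a single edge, $\dis_B(u_B,v_B) \le 1 = d_B(u_B,v_B)$ when $u_B\ne v_B$ and both are $0$ otherwise, so the edge-block terms satisfy $\dis_B \le 2\,d_B$ trivially (in fact $\le d_B$). For a block $B$ that is a cycle, Lemma~\ref{lem:CycleDis2D} gives $\dis_B(u_B,v_B) = 2\,d_B(u_B,v_B)$ if $B$ is even and $2\,d_B(u_B,v_B)-1 \le 2\,d_B(u_B,v_B)$ if $B$ is odd (and the case $u_B=v_B$ contributes $0$ on both sides). Summing over all blocks and all ordered pairs $u,v$ yields $\Sz(G)\le 2W(G)$.

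For the equality analysis I would track when each block-term inequality is tight. An odd cycle $B$ contributes a strict deficit $2d_B(u_B,v_B)-1 < 2 d_B(u_B,v_B)$ for every pair with $u_B\ne v_B$, and such pairs exist (e.g. take $u,v$ to be two distinct vertices of $B$ itself), so any odd cycle block forces strict inequality. An edge-block $B=\{s,t\}$: here $\dis_B(u_B,v_B)\in\{0,1\}$ while $2 d_B(u_B,v_B)\in\{0,2\}$, so whenever some pair has $u_B=s, v_B=t$ (again guaranteed, since $s$ and $t$ themselves are such a pair) the term is $1 < 2$, strict. Thus equality requires every block to be an even cycle; conversely, if every block is an even cycle then every block-term is exactly $2d_B(u_B,v_B)$, so $\Sz(G)=2W(G)$.

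The main obstacle I expect is establishing the decomposition $\dis_G(u,v) = \sum_{B}\dis_B(u_B,v_B)$ rigorously — specifically verifying that for an edge $e=\{s,t\}$ in block $B$ the sign of $d_G(w,s)-d_G(w,t)$ depends only on $w_B$, and handling the boundary situation where $w$ lies in $B$ itself or where $w_B\in\{s,t\}$ (so $w$ is ``on'' the edge). This is where one genuinely uses that blocks in a cactus are edges or cycles: in a general block an edge need not separate the block, but in a cycle every edge does, and the two arcs determine the sides cleanly, matching the reasoning already used in the proof of Lemma~\ref{lem:CycleDis2D}.
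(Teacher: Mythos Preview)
Your proposal is correct and follows essentially the same route as the paper: decompose $\dis_G(u,v)$ over blocks using Proposition~\ref{prop:DistancePartition}, then apply Lemma~\ref{lem:CycleDis2D} blockwise and compare termwise with the corresponding decomposition of $d_G(u,v)$; the equality analysis is likewise identical. One small remark: the decomposition $\dis_G(u,v)=\sum_{B}\dis_B(u_B,v_B)$ itself does not require the cactus hypothesis (the paper's justification, that every shortest path from $u$ to an endpoint of $e\in E(B)$ passes through $u_B$, holds in any block-cut-vertex decomposition), so your final paragraph slightly mislocates the obstacle---the cactus assumption enters only when you invoke Lemma~\ref{lem:CycleDis2D} to bound $\dis_B$ by $2\,d_B$.
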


A special case of this result was already given in \cite{2016_ProofsOfThreeConjecturesOnTheQuotientsOfTheSzegedIndexAndTheWienerIndex}. There, \citeauthor{2016_ProofsOfThreeConjecturesOnTheQuotientsOfTheSzegedIndexAndTheWienerIndex} showed that Theorem~\ref{thm:Sz2W} holds for unicyclic graphs. 

\begin{proof}
Let $u$, $v$ be vertices in $G$ and $e$ be an edge in a block $B$. With $u_B$ as in Proposition~\ref{prop:DistancePartition} every shortest path from $e$ to $u$ uses $u_B$. The same is true for $v$ and $v_B$ as in Proposition~\ref{prop:DistancePartition}, respectively. Thus $e$ is $(u, v)$-distance-disparate if and only if it is $(u_B\,, v_B)$-distance-disparate. Hence, with $\mathcal{B}$ as set of blocks, we can write
\begin{equation}
    \dis_G(u, v) = \sum_{B \in \mathcal{B}} \dis_G(u_B\,, v_B) . 
\end{equation}

Suppose that every block is a cycle of even length. Then by Lemma~\ref{lem:CycleDis2D} and Proposition~\ref{prop:DistancePartition},
\begin{equation}
    \begin{split}
        \Sz(G) &= \frac{1}{2} \sum_{u, v \in V(G)} \dis_G(u, v) 
        = \frac{1}{2} \sum_{u, v \in V(G)} 
            \sum_{B \in \mathcal{B}} \dis_G(u_B\,, v_B) \\
        &= \frac{1}{2} \sum_{u, v \in V(G)} \sum_{B \in \mathcal{B}} 2 d_G(u_B\,, v_B)
        = \sum_{u, v \in V(G)} d_G(u, v) \\ 
        &= 2 \, W(G) .
    \end{split}
\end{equation}

Now if there is at least one odd cycle $C$, then again by Lemma~\ref{lem:CycleDis2D}, there is a strict inequality instead of the third equality in the above formula. Finally, if there is a block consisting of only a single edge $\{s, t\}$, then $\dis_G(s, t) = 1 = d_G(s, t)$, and thus also $\Sz(G) < 2 \, W(G)$. 
\end{proof}

Note, blocks consisting of two vertices connected with two edges considered as cycles of length two can be allowed in Theorem~\ref{thm:Sz2W}. Clearly, this is not a characterisation of graphs $G$ satisfying $\Sz(G) \leq 2 \, W(G)$, since every complete graph $K_n$ satisfies $\Sz(K_n) = W(K_n)$. Unfortunately, it is also not a characterisation of graphs satisfying $\Sz(G) = 2 \, W(G)$. Below, we give an example of a graph satisfying the equation that is not a cactus graph. 

\begin{exam} \label{exam:Sz2W}
Let $G$ consist of three paths of length two joined at their end points. Attach on one side of the end points of the paths two edges by their end points and on the other side three edges. See Figure~\ref{fig:CounterExampleSzeged} for an exemplary drawing. It can be checked via a computer, or even easily by hand that
\begin{equation*}
    \Sz(G) = 192 = 2 \cdot 96 = 2 \, W(G) .
\end{equation*}

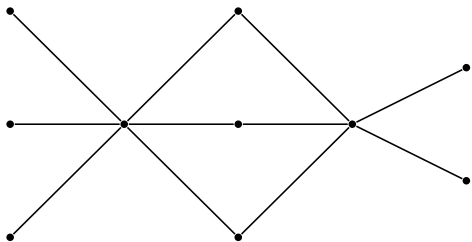
\begin{figure} [ht]
\centering
\begin{tikzpicture} [graph]
    \def\r{1.5cm}
    
    \begin{scope} [graph vertices]
    	\node [xshift = -2 * \r, yshift = -\r] (l1) {};
    	\node [xshift = -2 * \r] (l2) {};
    	\node [xshift = -2 * \r, yshift = \r] (l3) {};
    	
    	\node [xshift = -\r](cl) {};
    	
    	\node [yshift = -\r] (c1) {};
    	\node (c2) {};
    	\node [yshift = \r] (c3) {};
    	
    	\node [xshift = \r] (cr) {};
    	
    	\node [xshift = 2 * \r, yshift = -\r / 2] (r1) {};
    	\node [xshift = 2 * \r, yshift = \r / 2] (r2) {};
    \end{scope}
    
    \draw (l1) edge (cl);
    \draw (l2) edge (cl);
    \draw (l3) edge (cl);
    
    \draw (c1) edge (cl);
    \draw (c2) edge (cl);
    \draw (c3) edge (cl);
    
    \draw (c1) edge (cr);
    \draw (c2) edge (cr);
    \draw (c3) edge (cr);
    
    \draw (r1) edge (cr);
    \draw (r2) edge (cr);
\end{tikzpicture}
\caption{A bipartite non-cactus graph $G$ satisfying $\Sz(G) = 2 \, W(G)$.}
\label{fig:CounterExampleSzeged}
\end{figure}
\end{exam}

By generalizing the graph in Example~\ref{exam:Sz2W} to have $k$ paths instead of only 3, more example graphs satisfying the equality can be found. Not for every $k$ a suitable number of edges can be attached, but it seems there is no cap for $k$. The biggest example graph $G$ we found has 783 paths of length 2, 28 edges on one and 656\,009 edges on the other side attached. It satisfies 
\begin{equation*}
    \Sz(G) = 862\,902\,435\,600 = 2 \cdot 431\,451\,217\,800 = 2 \, W(G).
\end{equation*}
This suggests that also if the cyclomatic number, which is just $\abs{E(G)} - \abs{V(G)} + 1$ for connected graphs, is large, $\Sz(G) = 2 \, W(G)$ can still hold for non-cactus graphs. 

\section{Comparing Wiener and revised Szeged index on cactus graphs}
\label{sec:WIAndRevisedSzOnCactusGraphs}

From the last section, we can conclude that $\Sz^*(G) \leq 2 \, W(G)$ holds for bipartite cactus graphs $G$. But in case of non-bipartite cactus graphs the situation becomes more complicated. There are even cactus graphs $G$ satisfying $\Sz^*(G) = 2 \, W(G)$, where not every block is a cycle of even length as the following example shows. 

\begin{exam} \label{exam:RevisedSz2W}
Take a cycle of length 13, a cycle of length 11, six edges and join them at a single vertex to obtain a cactus graph $G$, as depicted in Figure~\ref{fig:CounterExampleRevisedSzeged}. It can be checked that
\begin{equation*}
    \Sz^*(G) = 3636 = 2 \cdot 1818 = 2  \, W(G) .
\end{equation*}

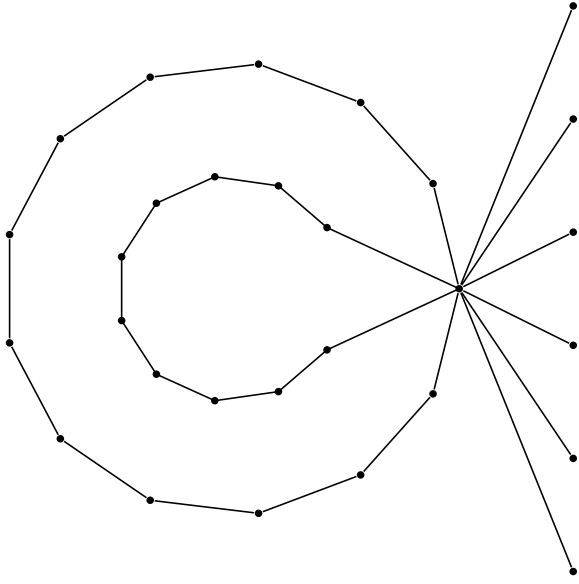
\begin{figure} [ht]
\centering
\begin{tikzpicture} [graph]
    \def\r{1.5cm}
    \begin{scope} [graph vertices]
		\foreach \i in {1, ..., 10}
		{
			\draw (\i * 360 / 11 : \r) node (i\i) {};
		}
		
		\foreach \i in {0, ..., 12}
		{
			\draw (\i * 360 / 13 : 2 * \r) node (o\i) {};
		}
		
		\foreach \i in {1, ..., 6}
		{
			\node at (3 * \r, 3.5 * \r - \i * \r) (e\i) {};
		}
    \end{scope}
	
	\foreach \i in {1, ..., 9}
	{
		\draw (\i * 360 / 11 : \r) edge ({(\i + 1) * 360 / 11} : \r);
	}
    
    \foreach [evaluate = {\j = int(mod(\i + 1, 13))}] \i in {0, ..., 12}
	{
		\draw (o\i) edge (o\j);
	}
    
    \foreach \i in {1, ..., 6}
	{
		\draw (o0) edge (e\i);
	}
	
	\draw ($(i1)+(0,0)$) edge ($(o0)+(0,0)$);
	\draw ($(i10)+(0,0)$) edge ($(o0)+(0,0)$);
    
\end{tikzpicture}
\caption{A cactus graph $G$ satisfying $\Sz^*(G) = 2 \, W(G)$.}
\label{fig:CounterExampleRevisedSzeged}
\end{figure}
\end{exam}

With this in mind, it seems difficult to make any concrete statements about the connection of the revised Szeged and the Wiener index in the case of cactus graphs. Hence, we focused on a subclass of cactus graphs and found the following relation, which is in contrast to Theorem~\ref{thm:Sz2W}. 

\begin{thm} \label{thm:RevisedSz2W}
Suppose every block of a graph $G$ is a cycle. Then 
\begin{equation*}
    2 \, W(G) \leq \Sz^*(G) , 
\end{equation*}
with equality if and only if every cycle in $G$ has even length. 
\end{thm}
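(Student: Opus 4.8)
The plan is to mirror the structure of the proof of Theorem~\ref{thm:Sz2W}, but working with the vertex-sum formula for $\Sz^*$ from Theorem~\ref{thm:RevisedSzAsVertexSum} instead of the vertex-sum formula for $\Sz$. The key reductions are the same: since every block of $G$ is a cycle, $G$ is a cactus, so Proposition~\ref{prop:DistancePartition} applies to distances, and (as argued in the proof of Theorem~\ref{thm:Sz2W}) an edge $e$ in a block $B$ is $(u,v)$-distance-disparate in $G$ if and only if it is $(u_B,v_B)$-distance-disparate, and likewise $e$ is $(u,v)$-distance-equal in $G$ if and only if it is $(u_B,v_B)$-distance-equal, because every shortest path from $e$ to $u$ (resp.\ $v$) passes through $u_B$ (resp.\ $v_B$). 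Hence all three quantities $d_G$, $\dis_G$, $\deq_G$ decompose blockwise, and by Theorem~\ref{thm:RevisedSzAsVertexSum} it suffices to prove the per-block inequality
\begin{equation*}
    \sum_{u, v \in V(G)} \br*{\dis_C(u_C, v_C) - \tfrac{1}{2}\,\deq_C(u_C, v_C)} + \text{(diagonal terms)} \ \geq\ 2 \sum_{u,v \in V(G)} d_C(u_C, v_C)
\end{equation*}
for each cycle block $C$; the diagonal terms $\deq_G(u,u)$ also split blockwise by the same argument with $v=u$.

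Next I would compute the cycle-local quantities. Lemma~\ref{lem:CycleDis2D} already gives $\dis_C(u,v)$ in terms of $d_C(u,v)$. I need the analogous fact for $\deq_C(u,v)$: on a cycle $C$ of length $n$, an edge has both endpoints equidistant from a vertex $w$ only at the ``antipodal'' edge, which exists precisely when $n$ is odd (one such edge for each $w$) and never when $n$ is even. So for distinct $u,v$ on an odd cycle, $\deq_C(u,v)$ is $1$ if the antipodal edge of $u$ equals the antipodal edge of $v$ — i.e.\ iff $u=v$, impossible here — hence in fact $\deq_C(u,v)=0$ for $u\ne v$ on any cycle, while $\deq_C(w,w)$ equals $1$ for odd $n$ and $0$ for even $n$. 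Plugging these into the Theorem~\ref{thm:RevisedSzAsVertexSum} expression: for an even cycle, $\dis_C=2d_C$, $\deq_C(u,v)=0$, diagonal contributions $0$, giving exactly $2W$ contribution (matching the even case of Theorem~\ref{thm:Sz2W}); for an odd cycle of length $n$ inside $G$ with $N=|V(G)|$ vertices, the block contributes, up to the global factor $\tfrac12$, a term $\sum_{u,v}\dis_C(u_C,v_C) = \sum_{u,v}(2d_C(u_C,v_C)-1\cdot[u_C\ne v_C])$ plus $N\sum_u \deq_C(u_C,u_C)$ minus a vanishing $\deq$-off-diagonal piece. The ``$-1$ per disparate-pair-shortfall'' is exactly compensated, and then over-compensated, by the $N\sum_u\deq_C(u_C,u_C)$ term, which counts (roughly) $N$ times the number of vertices of $G$ projecting onto $C$ with an antipodal edge present. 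The comparison then reduces to a clean combinatorial inequality on a single odd cycle of the form (number of ordered pairs projecting into $C$ with distinct projections) $\le$ $N\cdot$(number of projected vertices), which holds strictly whenever some projected vertex-pair is distinct, i.e.\ whenever $|V(C)|\ge 2$, which is automatic.

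I would then assemble the equality analysis: summing over blocks, $2W(G)\le\Sz^*(G)$ with equality iff every block contributes exactly its $2W$-share, iff no odd cycle occurs, iff every cycle has even length. The \textbf{main obstacle} I anticipate is bookkeeping the odd-cycle block term correctly — in particular getting the coefficient on the $N\sum_u\deq_C(u_C,u_C)$ contribution right and verifying that it dominates the deficit $\sum_{u,v}[u_C\ne v_C]$ coming from Lemma~\ref{lem:CycleDis2D}, since both are sums over pairs of $G$-vertices weighted by their projections onto $C$, and one must be careful that a vertex of $G$ whose projection onto $C$ is $w$ is indeed counted in $\deq_C(w,w)$ exactly when $n$ is odd. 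A secondary subtlety is confirming $\deq_C(u,v)=0$ for all distinct $u,v$ (including the even case, where it is trivial, and the odd case, where it needs the uniqueness of antipodal edges), so that the off-diagonal $\deq$ term genuinely drops out of the per-block computation.
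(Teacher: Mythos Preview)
Your overall strategy matches the paper's: use Theorem~\ref{thm:RevisedSzAsVertexSum}, push everything through the block decomposition, and compute on single cycles via Lemma~\ref{lem:CycleDis2D} together with the observation that on a cycle $C$ one has $\deq_C(w,w)=1$ for odd $|C|$, $\deq_C(w,w)=0$ for even $|C|$, and $\deq_C(x,y)=0$ whenever $x\neq y$. So the approach is right.

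There is, however, a real bookkeeping slip exactly at the place you flagged as the main obstacle. You assert that the ``$\deq$-off-diagonal piece'' vanishes. It does not: in the term $-\tfrac{1}{2}\sum_{u,v\in V(G)}\deq_C(u_C,v_C)$ the pair $(u,v)$ ranges over $V(G)^2$, and whenever $u_C=v_C$ (which happens for many pairs with $u\neq v$, and for every pair with $u=v$) one has $\deq_C(u_C,v_C)=1$ on an odd cycle. Your earlier correct statement ``$\deq_C(x,y)=0$ for $x\neq y$'' lives on $V(C)$, not on $V(G)$. Dropping this contribution \emph{overestimates} the block's share of $\Sz^*$, so the aggregate inequality you write down, while true, does not by itself yield $2\,W(G)\le\Sz^*(G)$.

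The paper sidesteps this entirely by not aggregating at all. It proves the inequality termwise: for every block $B$ and every pair $(u,v)\in V(G)^2$,
\[
    2\,d_G(u_B,v_B)\ \le\ \dis_G(u_B,v_B)+\deq_B(u_B,u_B)-\tfrac{1}{2}\,\deq_B(u_B,v_B),
\]
and then sums. With your cycle computations already in hand this is three one-line cases: for $B$ an even cycle both sides equal $2\,d_G(u_B,v_B)$; for $B$ odd with $u_B\neq v_B$ the right side is $(2d_G(u_B,v_B)-1)+1-0$, again equality; for $B$ odd with $u_B=v_B$ the right side is $0+1-\tfrac{1}{2}=\tfrac{1}{2}>0$, a strict inequality. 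Summing over blocks (Proposition~\ref{prop:DistancePartition}) and over $(u,v)$ gives the theorem, and the strict case occurs for some $(u,v)$ precisely when there is an odd block. This is the same computation you were heading towards, just organised so that no double sum ever needs to be compared.
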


Note, clearly a graph where every block is a cycle is a cactus graph. 

\begin{proof}
Let $u$, $v$ be vertices in $G$ and $e$ be an edge in a block $B$. Again, we use the notation of Proposition~\ref{prop:DistancePartition} with $u_B$ and $v_B$ for the vertices in $B$ closest to $u$ and $v$, respectively. Since $u_B$ is on every shortest path from $u$ to $e$, and the same is true for $v_B$ and $v$, it is evident that $\deq_B(u, v) = \deq_B(u_B, v_B)$. Furthermore, the set of blocks $\mathcal{B}$ of $G$ induces a partition of the edge set. Hence, 
\begin{equation*}
    \deq_G(u, v) = \sum_{B \in \mathcal{B}} \deq_B(u, v) = \sum_{B \in \mathcal{B}} \deq_B(u_B, v_B) .
\end{equation*}
Thus, with Theorem~\ref{thm:RevisedSzAsVertexSum} we can formulate the revised Szeged index of $G$ as
\begin{equation} \label{eq:RevisedSzBlockSum}
    \begin{split}
        \Sz^*(G) &= \frac{1}{2} \, \sum_{u, v \in V(G)} 
            \br*{\dis_G(u, v) + \deq_G(u, u) - \frac{1}{2} \, \deq_G(u, v)} \\
            &= \frac{1}{2} \sum_{u, v \in V(G)} 
                \sum_{B \in \mathcal{B}} 
                    \br*{\dis_G(u_B, v_B) + \deq_B(u_B, u_B) - \frac{1}{2} \, \deq_B(u_B, v_B)} .
    \end{split}
\end{equation}

Next we distinguish two cases, whereby the second case has two sub-cases, to show that for any vertices $u_B$ and $v_B$ in a block $B$, 
\begin{equation} \label{eq:RevisedSzSummand}
    2 \, d_G(u_B, v_B) \leq \dis_G(u_B, v_B) + \deq_B(u_B, u_B) - \frac{1}{2} \, \deq_B(u_B, v_B) .
\end{equation}
Case 1: Suppose that $B$ is a cycle of even length. Then,
\begin{align*}
    \deq_B(u_B, u_B) &= 0 = \deq_B(u_B, v_B), %\\ 
\shortintertext{and by Lemma~\ref{lem:CycleDis2D},}
    \dis_G(u_B, v_B) &= 2 \, d_G(u_B, v_B).
\end{align*}
Case 2: Suppose that $B$ is a cycle of odd length. 

Case 2.1: If $u_B \neq v_B$, then 
\begin{align*}
    \deq_B(u_B, u_B) &= 1, \quad 
        \deq_B(u_B, v_B) = 0, %\\
\shortintertext{\hspace{\parindent}and again by Lemma~\ref{lem:CycleDis2D}}
    \dis_G(u_B, v_B) &= 2 \, d_G(u_B, v_B) - 1.
\end{align*}
            
Case 2.2: If $u_B = v_B$, then
\begin{equation*}
    \begin{split}
    \deq_B(u_B, u_B) &= 1 = \deq_B(u_B, v_B), \\
    \dis_G(u_B, v_B) &= 0 = 2 \, d_G(u_B, v_B).  
    \end{split}
\end{equation*}

So in Case~1 and Case~2.1, we have equality in (\ref{eq:RevisedSzSummand}), and in Case~2.2, (\ref{eq:RevisedSzSummand}) is fulfilled with a strict inequality. Therefore by Proposition~\ref{prop:DistancePartition} and (\ref{eq:RevisedSzBlockSum}),
\begin{equation*}
    2 \, W(G) = \frac{1}{2} \, \sum_{u, v \in V(G)} \sum_{B \in \mathcal{B}} 2 \, d_G(u_B, v_B) 
        \leq \Sz^*(G) , 
\end{equation*}
with equality if and only if every cycle in $G$ has even length. 
\end{proof}

\section{Acknowledgements}

Stefan Hammer acknowledges the support of the Austrian Science Fund (FWF): W1230. 

\printbibliography

\end{document}